\documentclass[11pt]{article}
\usepackage{amsmath,amssymb,amsthm,mathtools,graphicx,float,epsf}
\usepackage{multirow}
\usepackage[table,xcdraw]{xcolor}
\usepackage{longtable}
\usepackage{colortbl} 
\def\RR{\hbox{I\kern-.2em\hbox{R}}}
\numberwithin{equation}{section}
\restylefloat{figure}
\usepackage{graphicx} 
\usepackage{amsmath}
\usepackage{amsthm}
\usepackage{amsfonts}
\usepackage{ mathrsfs }
\usepackage[caption = false]{subfig}
\usepackage{wrapfig}
\usepackage{enumerate}
\usepackage{enumitem}
\usepackage{array}
\usepackage{subfig}
\usepackage[margin=1cm]{caption}
\usepackage[margin=1in]{geometry}
\usepackage{authblk}

\usepackage[utf8]{inputenc}
\newtheorem{theorem}{Theorem}[section]
\newtheorem{lemma}[theorem]{Lemma}

\newtheorem{proposition}[theorem]{Proposition}
\theoremstyle{definition}

\theoremstyle{remark}

\usepackage{graphicx}
\usepackage{hyperref}
\hypersetup{
	colorlinks=true,
	linkcolor=blue,
	filecolor=magenta,
	urlcolor=cyan,
	citecolor=blue,
}
\newcommand{\PP}{\mathbb{P}}

\date{}
\graphicspath{ {Images/} }
\begin{document}
	
	\title{The Position-wise Prime Digit Distribution Theorem: A Formal Proof of Position-wise Digit Equidistribution in the Prime Numbers}
	\author[1]{\small Mahadee Al Mobin\footnote{mahadeealmobin@gmail.com} }
	\author[2]{\small Md. Shariful Islam \thanks{Corresponding Author: mdsharifulislam@du.ac.bd}}

	 \affil[1]{\footnotesize Bangladesh Institute of Governance and Management, Dhaka 1207, Bangladesh}
	\affil[2]{\footnotesize Department of Mathematics, University of Dhaka, Dhaka 1000, Bangladesh}

	\maketitle
	
	\vspace{-1.0cm}
	\noindent\rule{6.35in}{0.02in}\\
	{\bf Abstract.}
		We state and prove the Theorem: for primes $p < 10^n$ with base-$10$ expansion $p = \sum_{k=0}^{n(p)-1} d_k(p) 10^k$, the positional digit probabilities $P_n(d \mid k)$ satisfy
		\[
		\lim_{n \to \infty} P_n(d \mid k) =
		\begin{cases}
			1/10, & k \ge 1,\ d \in \{0,\dots,9\}, \\[4pt]
			1/9, & k = \mathrm{lead},\ d \in \{1,\dots,9\}.
		\end{cases}
		\]
		The limiting behavior splits cleanly into two distinct mechanisms: an arithmetic regime for interior digits and an Archimedean regime for the leading digit. For fixed interior positions ($k \ge 1$), digit extraction modulo $10^{k+1}$ reduces the problem to prime counts in reduced residue classes, where uniform distribution follows from Siegel--Walfisz (with Bombieri--Vinogradov allowing $k$ to grow with $n$). For the leading digit, the $1/9$ limit is not a Benford-type scale invariance, but arises from the local near-constancy of the prime density $1/\log t$ within single decades combined with a Toeplitz-type error averaging. Explicit classical and conditional error bounds are recorded for both regimes.\\

	\noindent{\it \footnotesize Keywords}: {\small 
		Positional digit distribution;
		Prime numbers,
		Radix representation (base 10);
		Equidistribution;
		Bombieri–Vinogradov theorem;
		Siegel–Walfisz theorem;
		Prime Number Theorem (PNT);
		Primes in arithmetic progressions}\\
	\noindent
	    \noindent{\it \footnotesize AMS Subject Classification 2020}: 11A63; 11N05; 11K38; 11L07; 11N13; 11N37; 11K06. \\
	\rule{6.35in}{0.02in}
	
	\section{Introduction}\label{introduction}
	The distribution of digits in prime numbers displays a notable statistical regularity. For any fixed base $q \geq 2$, the digits at specified positions in the base-$q$ expansions of primes tend, in an appropriate asymptotic sense, to follow simple limiting distributions. This phenomenon is of particular interest because primes are defined by multiplicative conditions, while digit extraction and distribution are fundamentally additive and combinatorial. The interplay between these structures makes the problem a natural focus in analytic number theory. In this manuscript, we investigate this phenomenon in the decimal system and establish the asymptotic distribution of individual digits in the decimal expansions of prime numbers. Our analysis distinguishes between interior digit positions and the leading digit, resulting in two distinct limiting laws: interior digits become asymptotically uniform over $\{0,\ldots,9\}$, while the leading digit becomes asymptotically uniform over $\{1,\ldots,9\}$.

		The modern investigation of digit distributions in prime numbers originates from Gelfond's 1968 formulation of several problems concerning the arithmetic properties of digits in primes and, more broadly, in values of polynomials and other arithmetic sequences~\cite{Gelfond1968}. Among these are fundamental questions regarding the base-$q$ sum-of-digits function $s_q(n)$. Specifically, Gelfond inquired whether the values of $s_q(p)$, as $p$ ranges over the primes, are equidistributed modulo a given integer $m$, and whether primes for which $s_q(p)$ is even or odd occur with equal asymptotic frequency. The latter is the special case $m=2$ of the former. These questions underscore a fundamental tension between the arithmetic structure of primes and the combinatorial nature of digit statistics. While primality is determined by the multiplicative structure of an integer, the sum-of-digits function depends on its positional representation and is thus sensitive to the additive structure of its digits. Understanding the extent to which such digit statistics retain their expected distribution when restricted to prime arguments constitutes a central theme in the study of digital properties of arithmetic sequences.\\
		
		The challenge in Gelfond's problems arises from the incompatibility between the arithmetic structure of primes and the digital structure of integers. A natural probabilistic heuristic suggests that, if the digits of a prime were distributed as independent and approximately uniform random variables, then the values of $s_q(p)$ should be equidistributed modulo $m$. However, this heuristic does not constitute a proof. Primality is governed by multiplicative constraints, while the sum-of-digits function is determined by the positional representation of an integer and, in particular, by the combinatorial effects of carrying. There is no direct mechanism that translates the expected randomness of digits into a rigorous statement about primes. The situation is further complicated by elementary arithmetic relations between digit sums and divisibility. For example, $s_{10}(n)\equiv n\pmod{3}$, so digit sums are not independent of the arithmetic structure of the underlying integer. Consequently, arguments for primes must distinguish genuine equidistribution arising from the distribution of primes from apparent randomness due to elementary congruence relations. For several decades after Gelfond's formulation, substantial progress was achieved for more structured sequences of integers. Equidistribution results for $s_q(n)$ modulo $m$ were established in various polynomial and arithmetic settings, and Gelfond's work addressed corresponding questions for general integers. The restriction to prime arguments, however, introduces an additional layer of difficulty: one must simultaneously control the digital structure of the integers and the arithmetic distribution of primes. In this context, the primary obstacle is to obtain sufficient cancellation in the relevant prime-counting sums, rather than relying solely on combinatorial or probabilistic heuristics.\\

		Prior to the resolution of the prime case, the corresponding equidistribution problem was successfully studied for various other arithmetic sequences. Notably, Kim obtained estimates for the joint distribution of $q$-additive functions in residue classes, including sums of digits associated with several pairwise coprime bases. These results provided significant examples of the equidistribution phenomena anticipated by Gelfond beyond the context of prime arguments~\cite{Kim1999}. They also demonstrated that the distribution of digit-sum functions in residue classes is a robust property of a broad class of arithmetic sequences, while emphasizing that primes present the principal challenge due to their strong multiplicative structure. This perspective was further developed by Madritsch, who situated the problem within the broader framework of canonical number systems. His work traced the digit-sum equidistribution problem back to Gelfond's original formulation and extended the discussion beyond the conventional base-$q$ representation~\cite{Madritsch2012}.\\
		
		The decisive breakthrough for prime arguments was achieved by Mauduit and Rivat. In their 2010 work, they established a quantitative equidistribution
		theorem for the base-$q$ sum-of-digits function $s_q(p)$ over primes $p\leq x$. Their results included both a central limit theorem and a local limit theorem for the distribution of $s_q(p)$, thereby resolving, in particular, the problem concerning the parity of the sum of digits of primes as a special case~\cite{MauduitRivat2010}. This represented a major advance in
		the study of digital functions along prime sequences and provided a rigorous demonstration that the expected statistical behavior of digit sums persists despite the multiplicative constraints imposed by primality.
		An important precursor to this breakthrough was an earlier paper of Mauduit
		and Rivat, published in \emph{Compositio Mathematica} in 2009. There, they
		established the weaker, but historically significant, result that the sum of digits of primes is, on average, close to its expected value. This work is widely regarded as an important step toward the eventual resolution of
		Gelfond's problem for primes~\cite{Drmota2009}. The underlying methods were
		subsequently extended by Drmota, Mauduit, and Rivat to polynomial
		subsequences, demonstrating that the techniques developed for controlling
		digital functions possess a considerably broader range of applicability
		beyond the sequence of primes~\cite{Drmota2011}.\\

		The approach developed by Mauduit and Rivat provides a fundamental framework for studying the distribution of digital functions along the primes. Its main ingredients are particularly relevant in understanding how arithmetic properties of primes interact with digit-based structures. The argument is centered around exponential sums of the form.
		\[
		S(\alpha)=\sum_{p\leq x} e\bigl(\alpha s_q(p)\bigr),
		\qquad e(y):=e^{2\pi i y},
		\]
		where \(s_q(n)\) denotes the sum of the base-\(q\) digits of \(n\). Equidistribution of \(s_q(p)\) modulo \(m\) can then be obtained from suitable estimates for \(S(\alpha)\), particularly when \(\alpha\) is separated from rational numbers having small denominators.\\
		
		Two principal tools are used to estimate these exponential sums. The first is Vaughan's identity, which expresses the von Mangoldt function \(\Lambda(n)\) in terms of structured convolution sums. This reduces the problem to estimating Type I and Type II sums, which are more amenable to analytic techniques than sums restricted directly to the primes; such decompositions are standard in the analysis of exponential sums over primes in arithmetic progressions~\cite{BalogPerelli1985}. The second ingredient concerns the digital structure itself. The sum-of-digits function is treated through Fourier-analytic methods that exploit the recursive structure arising from truncation in base \(q\). The corresponding estimates originate in the work of Gelfond and K\'atai and were subsequently refined substantially by Drmota, Mauduit and Rivat in their treatment of the prime-distribution problem.\\
		
		Thus, the Mauduit--Rivat method combines two complementary structures: an analytic decomposition of the prime-counting problem into bilinear sums and a harmonic-analytic treatment of the underlying digital function. This combination has provided a general framework for investigating the interaction between primes and automatic or digit-defined sequences. The framework has subsequently been extended in several directions. Aloui, Mauduit, and Mkaouar investigated correlations of the sum-of-digits function along the primes, extending the study beyond one-dimensional equidistribution to more refined statistical properties~\cite{Aloui2021}. Drmota, Mauduit, and Rivat established a prime number theorem for digital properties over two coprime bases, yielding a considerably more intricate multidimensional extension of the one-base setting~\cite{Drmota2020}. Related techniques have also been applied to other automatic sequences. In particular, Mauduit and Rivat obtained prime-distribution results for the Rudin--Shapiro sequence using exponential-sum estimates together with arguments based on orthogonality to the M\"obius function~\cite{MauduitRivat2015}. More generally, Drappeau and Mullner developed exponential-sum estimates for automatic sequences, placing several of these examples within a broader common framework~\cite{DrappeauMullner2017}. Recent work of Drmota and Rivat surveys developments concerning Gelfond-type conjectures for primes and squares of primes, reflecting the cumulative progress on these questions over several decades~\cite{DrmotaRivat2025}. Further results in this direction include the study of digital functions along squares of primes~\cite{DrmotaRivat2025} and investigations of representations of primes as sums of Fibonacci numbers~\cite{DrmotaSpiegelhofer2025}.\\
		
		A related area of research investigates primes subject to \emph{a priori} digital constraints, rather than unconstrained digit statistics. By combining the Hardy--Littlewood circle method with Harman's sieve and Type~I/II estimates, Maynard demonstrated that infinitely many primes omit any single designated digit in a fixed base~\cite{Maynard2016}. Previously, Bourgain obtained asymptotic formulas for primes with fixed binary patterns under density assumptions~\cite{Bourgain2013}, and Swaenepoel generalized these results to accommodate a positive proportion of preassigned digits in arbitrary bases~\cite{Swaenepoel2019}. Additionally, Nath established Bombieri--Vinogradov-type distribution estimates for digit-deficient primes in large bases by applying Fourier analysis on digit-restricted sets and exponential sums over primes in arithmetic progressions~\cite{Nath2021}. These findings demonstrate the effectiveness of analytic and harmonic-analytic methods for studying primes with digital restrictions. However, their objectives differ from the position-wise distribution problem addressed in this work. Results concerning missing or prescribed digits pertain to the existence and distribution of primes within specially structured subsets, whereas the present question concerns the limiting frequency of each digit at a fixed decimal position among all primes for which that position is defined. Notably, such results do not imply position-wise independence or uniformity of individual digits. Recent literature surveys~\cite{mobin2024cryptanalysis, al2026sieve} formulated the conjecture that every decimal digit appears with limiting probability $1/10$ across all aggregated positions, which was subsequently proved by Mobin and Islam \cite{AlMobinIslam2026PrimeDigit}. This result establishes the limiting behavior of digits in the pooled average, rather than position-wise digit uniformity, which is a much stronger statement and remains open. The present manuscript addresses precisely these position-wise asymptotics.\\

	\section{Construction of the Problem}
	
	\begin{theorem}[Position-wise Prime Digit Distribution]\label{conj:point_digit_dist}
		Let $S_n = \{p \in \PP : p < 10^n\}$, with $N_n = |S_n| = \pi(10^n)$. For $p \in S_n$, let $n(p) = \lfloor \log_{10} p \rfloor + 1$ and write
		\[
		p = \sum_{k=0}^{n(p)-1} d_k(p)\,10^k, \qquad d_k(p) \in \{0,\dots,9\},\ \ d_{n(p)-1}(p) \neq 0.
		\]
		For a position $k \ge 0$, define the \emph{positional digit count}
		\[
		N_n(k) = \#\{p \in S_n : n(p)>k \},
		\]
		and set $N_n(\mathrm{lead}) = N_n$. For $d \in \{0,\dots,9\}$, define the positional probability
		\[
		P_n(d \mid k) = \frac{1}{N_n(k)}\,\#\bigl\{p \in S_n : n(p)>k,\ d_k(p) = d\bigr\}, \qquad
		P_n(d \mid \mathrm{lead}) = \frac{1}{N_n}\,\#\bigl\{p \in S_n : d_{n(p)-1}(p) = d\bigr\}.
		\]
		Then, for every position $k$,
		\[
		\lim_{n \to \infty} P_n(d \mid k) \;=\;
		\begin{cases}
			\dfrac{1}{10}, & k \ge 1,\ d \in \{0,\dots,9\}, \\[10pt]
			\dfrac{1}{9}, & k = \mathrm{lead},\ d \in \{1,\dots,9\}.
		\end{cases}
		\]
	\end{theorem}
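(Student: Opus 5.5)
Both regimes will be handled by counting primes in sets defined by one decimal digit. For interior positions the counting is arithmetic, through primes in residue classes modulo $10^{k+1}$. For the leading digit it is Archimedean, through primes in short-scale intervals $[d\,10^{m},(d+1)10^{m})$. Throughout we use only the prime number theorem in the form $\pi(x)=\mathrm{li}(x)+O(x e^{-c\sqrt{\log x}})$ and, for fixed modulus $q$, its version for progressions, $\pi(x;q,a)=\pi(x)/\varphi(q)+o(\pi(x))$ for $(a,q)=1$. Siegel--Walfisz gives the uniform version, but fixed $q$ suffices since $k$ is fixed.

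\emph{Interior digits, $k\ge1$ fixed.} Put $q=10^{k+1}$. For $n(p)>k$, the condition $d_k(p)=d$ says that $p \bmod q$ lies in the set $R_d$ of residues $r\in[0,q)$ whose $k$-th digit is $d$; this set has exactly $10^k$ elements. The finitely many primes dividing $10$ and the primes below $10^k$ contribute $O(1)$, so we may restrict to reduced residues. The key count is
\[
\#\{r\in R_d:(r,10)=1\}=\varphi(q)/10 .
\]
This holds because coprimality with $10$ depends only on the last digit $d_0$. Since $k\ge1$, the digit $d_k$ can be fixed independently of $d_0$. The reduced residues are therefore $10^{k}\cdot 4$ in number overall, and exactly a tenth of them, namely $4\cdot10^{k-1}$ (equal to $\varphi(q)/10$), have $d_k=d$. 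Summing the progression PNT over these classes gives $\#\{p<10^n: d_k(p)=d\}=\pi(10^n)/10+o(\pi(10^n))$. Also $N_n(k)=\pi(10^n)-O(1)$. Dividing yields the limit $1/10$. (If $k=k(n)$ were allowed to grow, Bombieri--Vinogradov would control the sum over classes on average, but this is not needed for the stated theorem.)

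\emph{Leading digit.} Write
\[
\#\{p<10^n:\text{lead}=d\}=\sum_{m=0}^{n-1}\Bigl(\pi\bigl((d+1)10^m\bigr)-\pi\bigl(d\,10^m\bigr)\Bigr).
\]
By the PNT, each summand equals $\int_{d10^m}^{(d+1)10^m}dt/\log t$ plus an error $O(10^m e^{-c\sqrt{m}})$. On this interval $\log t=m\log 10+O(1)$, so the integral is $\frac{10^m}{m\log 10}\bigl(1+O(1/m)\bigr)$, which is independent of $d$ to leading order.

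Summing over $m$, the geometric weights $10^m$ concentrate the mass on the top decades. This is the Toeplitz-type averaging: if $a_m\sim b_m$ with $b_m=10^m/(m\log 10)$, then $\sum_{m<n}a_m\sim\sum_{m<n}b_m$, because $\sum_{m<n} b_m\to\infty$ and the weights $b_m/\sum_{j<n} b_j$ form a regular Toeplitz matrix. Consequently each digit $d\in\{1,\dots,9\}$ receives count $\sim S_n:=\sum_{m<n}10^m/(m\log10)$. Summing over the nine digits gives $N_n\sim 9S_n$, and the ratio tends to $1/9$.

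\emph{Main obstacle.} The delicate step is the error bookkeeping in the leading-digit sum. The relative error $O(1/m)$ from freezing $\log t$ must not accumulate, and this is exactly where the geometric dominance of the last few decades is used. In the proof I will state the Toeplitz lemma explicitly: $a_m/b_m\to1$ together with $\sum b_m=\infty$ implies $\sum_{m<n}a_m/\sum_{m<n}b_m\to1$. I will then verify that the PNT error terms are $o(b_m)$. The interior case is routine once the residue count $\varphi(q)/10$ is established.
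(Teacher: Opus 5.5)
Your proposal is correct. The interior-digit part is the same as the paper's: the residue count of Lemma~\ref{lem:comb}, the fixed-modulus prime number theorem for progressions, and division by $N_n(k)=\pi(10^n)+O(1)$.

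For the leading digit you take a somewhat different route.
\begin{itemize}
\item The paper normalizes inside each decade by the total number $a_m$ of $m$-digit primes. It writes the count as $N_n/9+\sum_{m\le n}a_m\varepsilon_m(d)$ with $\varepsilon_m(d)=\pi(I_m(d))/a_m-1/9\to 0$. It then shows the weighted average $\sum_m (a_m/N_n)\,\varepsilon_m(d)$ vanishes, by computing the limiting weight profile $9/10^{j+1}$ in the lag $j=n-m$ and splitting into near and far decades.
\item You compare every digit class to a single deterministic, $d$-independent benchmark $b_m=10^m/(m\log 10)$. The Stolz--Ces\`aro form of Toeplitz ($a_m\sim b_m$, $b_m>0$, $\sum b_m=\infty$) gives each count $\sim S_n$. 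You then recover the denominator from $N_n=\sum_{d=1}^{9}\#\{p<10^n:\ \text{lead}=d\}\sim 9S_n$.
\end{itemize}
Your route is shorter and needs no information about how the weights are distributed across decades. The paper's route keeps the deviation in the explicit form $P_n(d\mid\mathrm{lead})-1/9=\sum_m w_{n,m}\varepsilon_m(d)$. That makes the concentration on the top decades visible and carries per-decade error rates directly to the final probability.

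Your ``main obstacle'' paragraph misattributes the mechanism. The lemma you state uses only $\sum b_m=\infty$, not geometric dominance of the last decades. The $O(1/m)$ relative errors from freezing $\log t$ cannot accumulate simply because they tend to $0$. Geometric growth would matter only for the quality of an explicit rate, not for the limit.

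Two small bookkeeping points, both affecting only $m=0$ and contributing $O(1)$:
\begin{itemize}
\item $b_0$ is undefined, so start the benchmark at $m\ge 1$.
\item $\pi((d+1)10^m)-\pi(d\,10^m)$ counts primes in $(d\,10^m,(d+1)10^m]$, not $[d\,10^m,(d+1)10^m)$. The paper makes the same slip.
\end{itemize}
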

	
	This is proved in three stages: (\S\ref{sec:interior}) the interior digit positions, where a fixed position $k\ge1$ is reduced to residue classes modulo $10^{k+1}$ and Siegel--Walfisz gives equidistribution of primes across the relevant reduced residue classes; (\S\ref{sec:leading}) the leading digit, where the Prime Number Theorem (PNT) gives asymptotically equal prime counts within each decade and a weighted-averaging argument shows that the cumulative contribution of the lower decades is negligible; (\S\ref{sec:unific}) the unification of findings, where the two propositions are combined to establish the stated position-wise prime digit distribution.

	\section{The Interior Digit Position}\label{sec:interior}

\begin{lemma}[Combinatorial uniformity of $\delta_k$]\label{lem:comb}
	Fix $k \ge 1$ and set $M = 10^{k+1}$. For $r \in \{0,\dots,M-1\}$ write $r = d\cdot 10^k + s$ with $d \in \{0,\dots,9\}$, $s \in [0,10^k)$, and let $\delta_k(r) := d$ denote the digit of $r$ in position $k$. Then for every $d \in \{0,\dots,9\}$,
	\[
	\#\{ r \in U(M) : \delta_k(r) = d \} \;=\; \frac{\varphi(M)}{10} \;=\; 4\cdot 10^{k-1}.
	\]
\end{lemma}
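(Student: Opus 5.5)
The idea is that, for $k\ge 1$, coprimality to $M=10^{k+1}$ depends only on the last digit $d_0(r)$. This digit sits in position $0 \neq k$. The argument therefore reduces to a product-set count.

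First, since $M = 2^{k+1}5^{k+1}$, an integer $r$ satisfies $\gcd(r,M)=1$ if and only if $2\nmid r$ and $5\nmid r$. Both conditions are determined by $r \bmod 10$, that is, by the units digit of $r$. Hence $r\in U(M)$ exactly when its units digit lies in $\{1,3,7,9\}$.

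Next, write each $r\in\{0,\dots,M-1\}$ by its full base-$10$ digit vector $(r_0,r_1,\dots,r_k)\in\{0,\dots,9\}^{k+1}$. This is a bijection. In this notation $\delta_k(r)=r_k$, and since $k\ge1$, the digit $r_k$ is a different coordinate from $r_0$. So for fixed $d$, the set $\{r\in U(M):\delta_k(r)=d\}$ corresponds to the set of vectors with $r_0\in\{1,3,7,9\}$, $r_k=d$, and $r_1,\dots,r_{k-1}$ arbitrary. Its cardinality is therefore
\[
4\cdot 10^{k-1}.
\]

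Finally, $\varphi(M)=M\,(1-\tfrac12)(1-\tfrac15)=\tfrac{2}{5}\cdot 10^{k+1}=4\cdot 10^{k}$. Dividing by $10$ gives $4\cdot10^{k-1}$, which matches the count above, so each digit class receives exactly $\varphi(M)/10$ elements.

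There is no real obstacle. The only point needing care is the hypothesis $k\ge1$, which guarantees that the fixed coordinate $r_k$ is independent of the units digit governing coprimality. For $k=0$ the statement fails, since only the digits $1,3,7,9$ occur. When $k=1$ the middle block $r_1,\dots,r_{k-1}$ is empty and contributes the factor $10^0=1$, which should be noted explicitly.
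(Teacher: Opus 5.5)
Your proof is correct and follows the same route as the paper: for $k\ge 1$, coprimality to $M$ is decided by the units digit, which is a different coordinate from the position-$k$ digit, so fixing $\delta_k(r)=d$ leaves $4\cdot 10^{k-1}$ admissible residues. The paper counts the admissible $s\in[0,10^k)$ by splitting into blocks of ten instead of using your digit-vector product, which is only a cosmetic difference, and your remark that $M=2^{k+1}5^{k+1}$ (so $\gcd(r,M)=1$ iff $\gcd(r,10)=1$) makes explicit a step the paper uses without stating.
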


\begin{proof}
	Recall that $r$ is written in the form
	\[
	r \;=\; d\cdot 10^{k} + s, \qquad 0 \le d \le 9, \quad 0 \le s < 10^{k},
	\]
	so that $d$ is the leading digit of $r$ (in the $(k+1)$-digit representation) and $s$ is the number formed by the remaining $k$ digits. Our goal is to show that the count of $r \in [0, M)$ (where $M = 10^{k+1}$) with $\gcd(r,M)=1$ and the count of leading digit $d$ does not depend on $d$, and is equal to $\varphi(M)/10$.\\
	
	Since $k \ge 1$, the factor $10^k$ is divisible by $10$; that is,
	\[
	10 \mid 10^{k}.
	\]
	Hence $d \cdot 10^k \equiv 0 \pmod{10}$ for every digit $d$. Reducing $r = d\cdot 10^k + s$ modulo $10$ therefore gives
	\[
	r \bmod 10 \;=\; \big(d\cdot 10^k + s\big) \bmod 10 \;=\; \big(0 + s\big) \bmod 10 \;=\; s \bmod 10.
	\]
	In particular, the residue of $r$ modulo $10$ is completely determined by $s$ and is independent of the choice of $d$.
	
	Recall that for an integer $n$,
	\begin{equation}\label{eq:gcd_equiv}
		\gcd(n,10) = 1 \iff n \bmod 10 \in \{1,3,7,9\},
	\end{equation}

	since $10 = 2\times 5$, and the residues $0,2,4,5,6,8$ modulo $10$ correspond exactly to multiples of $2$ or $5$.
	
	Applying this with $n = r$ and using \eqref{eq:gcd_equiv},
	\[
	\gcd(r,10) = 1 \iff r \bmod 10 \in \{1,3,7,9\} \iff s \bmod 10 \in \{1,3,7,9\}.
	\]
	Thus whether $r$ is coprime to $10$ depends only on the value of $s \bmod 10$, and this criterion is the same regardless of which digit $d$ we have fixed.\\

	Now fix $d$ and let $s$ range over the full interval $[0, 10^k)$, which contains exactly $10^k$ integers. Partition this interval into consecutive blocks of length $10$:
	\[
	[0,10), \, [10,20), \, \ldots, \, [10^k - 10, \, 10^k).
	\]
	There are $10^k / 10 = 10^{k-1}$ such blocks, and each block is a complete residue system modulo $10$ i.e., as $s$ ranges over any one block, $s \bmod 10$ takes each of the values $0,1,2,\ldots,9$ exactly once.\\
	
	Within each block, exactly $4$ values of $s$ satisfy $s \bmod 10 \in \{1,3,7,9\}$. Since there are $10^{k-1}$ complete blocks, the total number of $s \in [0,10^k)$ with $s \bmod 10 \in \{1,3,7,9\}$ is
	\[
	4 \cdot 10^{k-1}.
	\]
	Crucially, this count did not use the value of $d$ at all hence it holds for every fixed $d \in \{0,1,\ldots,9\}$.\\

	Recall $M = 10^{k+1}$, and by the standard formula for Euler's totient function on prime powers,
	\[
	\varphi(M) = \varphi(10^{k+1}) = 10^{k+1}\left(1 - \tfrac12\right)\left(1 - \tfrac15\right) = 10^{k+1}\cdot \tfrac{1}{2}\cdot\tfrac{4}{5} = 4\cdot 10^{k}.
	\]
	Thus,
	\[
	4\cdot 10^{k-1} \;=\; \frac{4\cdot 10^{k}}{10} \;=\; \frac{\varphi(M)}{10}.
	\]
	
	For every fixed leading digit $d \in \{0,1,\dots,9\}$, the number of $s \in [0,10^k)$ for which $r = d\cdot 10^k + s$ satisfies $\gcd(r,10)=1$ equals $4\cdot 10^{k-1} = \varphi(M)/10$. Since this value does not depend on $d$, the integers in $[0,M)$ coprime to $10$ are distributed uniformly among the $10$ possible leading digits.
\end{proof}

\begin{lemma}\label{lem:sw}
	For fixed modulus $M$ and $(a,M)=1$,
	\[
	\pi(x; M, a) \sim \frac{\pi(x)}{\varphi(M)} \qquad (x \to \infty),
	\]
	with error term $O_M\bigl(x\exp(-c\sqrt{\log x})\bigr)$ for an absolute constant $c>0$ (ineffective in $M$).
\end{lemma}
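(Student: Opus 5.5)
This is the Siegel--Walfisz theorem for a fixed modulus. I would derive it from the prime number theorem in arithmetic progressions with the de la Vallée Poussin error term, rather than reprove it from scratch.

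\emph{Reduction to characters.} I work with the Chebyshev function
\[
\psi(x;M,a)=\sum_{n\le x,\ n\equiv a \ (\mathrm{mod}\ M)}\Lambda(n).
\]
By orthogonality of Dirichlet characters modulo $M$,
\[
\psi(x;M,a)=\frac{1}{\varphi(M)}\sum_{\chi \bmod M}\overline{\chi}(a)\,\psi(x,\chi), \qquad \psi(x,\chi)=\sum_{n\le x}\chi(n)\Lambda(n).
\]
The principal character contributes $\psi(x)+O(\log x\cdot \omega(M))$, which is $x+O(x e^{-c\sqrt{\log x}})$ by the classical prime number theorem.

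\emph{Non-principal characters.} For each non-principal $\chi$ I would invoke the explicit formula, truncated at height $T$. This needs the zero-free region $\sigma>1-c_1/\log(M(|t|+2))$ for $L(s,\chi)$, together with the standard bound on the number of zeros up to height $T$. The only possible exception is a single real (Siegel) zero $\beta_1$ attached to a real character.

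Since $M$ is fixed, there are finitely many characters, and $\beta_1$ (if it exists) is some fixed number strictly less than $1$. Its contribution $x^{\beta_1}/\beta_1$ is therefore $O_M(x e^{-c\sqrt{\log x}})$ once $x$ is large in terms of $M$. This is exactly where the ineffectivity in $M$ enters: uniformity in $M\le(\log x)^A$ would require Siegel's theorem $\beta_1<1-c(\varepsilon)M^{-\varepsilon}$. For fixed $M$, however, the existence of some such fixed $\beta_1<1$ suffices.

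Choosing $T=\exp(\sqrt{\log x})$ in the truncated explicit formula gives
\[
\psi(x,\chi)\ll_M x\exp(-c\sqrt{\log x}),
\]
and summing over the characters yields
\[
\psi(x;M,a)=\frac{x}{\varphi(M)}+O_M\bigl(x e^{-c\sqrt{\log x}}\bigr).
\]

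\emph{From $\psi$ to $\pi$.} Prime powers $p^j$ with $j\ge2$ contribute $O(\sqrt{x}\log x)$, so the same estimate holds for $\theta(x;M,a)$. Partial summation,
\[
\pi(x;M,a)=\frac{\theta(x;M,a)}{\log x}+\int_2^x\frac{\theta(t;M,a)}{t\log^2 t}\,dt,
\]
then gives
\[
\pi(x;M,a)=\frac{\mathrm{Li}(x)}{\varphi(M)}+O_M\bigl(x e^{-c'\sqrt{\log x}}\bigr).
\]
Comparing with $\pi(x)=\mathrm{Li}(x)+O(xe^{-c\sqrt{\log x}})$ yields both the asymptotic $\pi(x;M,a)\sim\pi(x)/\varphi(M)$ and the stated error term, after shrinking $c$.

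The main obstacle is the zero-free region and the treatment of the possible exceptional zero. In the write-up I would cite these as classical (Davenport, \emph{Multiplicative Number Theory}, Ch.~14, 20 and 22) rather than reprove them, emphasizing that only fixed $M$ is needed here.
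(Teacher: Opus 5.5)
Your proof is correct, but it takes a different route from the paper. The paper's proof only cites results. It quotes the Siegel--Walfisz theorem as $\pi(x;M,a)=\operatorname{Li}(x)/\varphi(M)+O_M(xe^{-c'\sqrt{\log x}})$, then replaces $\operatorname{Li}(x)$ by $\pi(x)$ using the prime number theorem with the de la Vall\'ee Poussin error term. You instead derive that intermediate formula for a fixed modulus from the underlying analytic machinery:
orthogonality of characters,
the truncated explicit formula with $T=\exp(\sqrt{\log x})$,
the classical zero-free region,
and partial summation from $\psi$ to $\theta$ to $\pi$.
Your final comparison with $\pi(x)=\operatorname{Li}(x)+O(xe^{-c\sqrt{\log x}})$ is the same as the paper's.

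What your route gains is conceptual. It shows that for fixed $M$ the lemma is just the classical prime number theorem for arithmetic progressions, so Siegel's theorem, and with it the full strength of Siegel--Walfisz, is never needed. The paper's citation is shorter, and it stays valid uniformly for $M\le(\log x)^A$, which your fixed-$\beta_1$ argument does not.

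One of your remarks should be corrected. For fixed $M$, a possible exceptional zero does not cause ineffectivity. The class-number-formula bound $L(1,\chi)\gg M^{-1/2}$ gives the effective estimate $1-\beta_1\gg M^{-1/2}(\log M)^{-2}$. Ineffectivity arises only when Siegel's theorem is used to get uniformity in $M$. So the phrase ``ineffective in $M$'' in the lemma is a harmless disclaimer that your proof neither needs nor establishes.
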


\begin{proof}
	By the Siegel--Walfisz theorem \cite{Walfisz1936}, for every fixed $M$ and every
	$a$ with $(a,M)=1$,

	\[
	\pi(x;M,a)
	=
	\frac{\operatorname{Li}(x)}{\varphi(M)}
	+
	O_M\!\left(xe^{-c'\sqrt{\log x}}\right)
	\]
	for some absolute constant $c'>0$. Finally, the prime number theorem \cite{ELLIOTT2022353}
	gives
	\[
	\operatorname{Li}(x)=\pi(x)+O\!\left(xe^{-c''\sqrt{\log x}}\right),
	\]
	after possibly decreasing the constant $c'$. Hence
	\[
	\pi(x;M,a)
	=
	\frac{\pi(x)}{\varphi(M)}
	+
	O_M\!\left(xe^{-c\sqrt{\log x}}\right),
	\]
	which in particular implies
	\[
	\pi(x;M,a)\sim\frac{\pi(x)}{\varphi(M)}.
	\]
\end{proof}
\begin{proposition}\label{prop:interior}
	For fixed $k \ge 1$ and $d \in \{0,\dots,9\}$,
	\[
	P_n(d \mid k) \longrightarrow \frac{1}{10} \qquad (n \to \infty).
	\]
\end{proposition}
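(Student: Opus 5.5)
Fix $k\ge1$ and put $M=10^{k+1}$. The plan is to express the numerator and denominator of $P_n(d\mid k)$ through prime counts in residue classes modulo $M$, apply Lemma~\ref{lem:sw} to each reduced class, and sum using Lemma~\ref{lem:comb}.

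\emph{Step 1 (the denominator).} The condition $n(p)>k$ is equivalent to $p\ge 10^k$. Hence
\[
N_n(k)=\pi(10^n)-\pi(10^k)=\pi(10^n)+O_k(1).
\]

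\emph{Step 2 (digit extraction).} For any $p$, the digit $d_k(p)$ depends only on $p \bmod M$: writing $p\equiv r \pmod M$ with $0\le r<M$, we have $d_k(p)=\delta_k(r)$ in the notation of Lemma~\ref{lem:comb}. Only finitely many primes divide $M$, namely $2$ and $5$, and both are $<10^k$. So every prime $p\ge 10^k$ lies in a reduced class $r\in U(M)$. This gives
\[
\#\{p\in S_n: n(p)>k,\ d_k(p)=d\}=\sum_{\substack{r\in U(M)\\ \delta_k(r)=d}}\pi(10^n;M,r)\;+\;O_k(1),
\]
where the $O_k(1)$ absorbs the primes $p<10^k$, and the strict inequality $p<10^n$ versus $\le$ changes nothing.

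\emph{Step 3 (summing the classes).} Lemma~\ref{lem:sw} gives, for each $r\in U(M)$,
\[
\pi(10^n;M,r)=\frac{\pi(10^n)}{\varphi(M)}\bigl(1+o(1)\bigr).
\]
The sum in Step 2 has a fixed finite number of terms, $\varphi(M)/10$ of them by Lemma~\ref{lem:comb}. Summing therefore yields
\[
\frac{\varphi(M)}{10}\cdot\frac{\pi(10^n)}{\varphi(M)}\bigl(1+o(1)\bigr)=\frac{\pi(10^n)}{10}\bigl(1+o(1)\bigr).
\]

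\emph{Step 4 (conclusion).} Divide the result of Step 3 by Step 1. Since $\pi(10^n)\to\infty$, the $O_k(1)$ terms are negligible, and $P_n(d\mid k)\to 1/10$.

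There is no deep obstacle here, because $k$ is fixed. The step that needs care is Step 3: the $o(1)$ must be uniform over the finitely many residues $r$. This holds simply because the set of residues is finite and fixed, so the implicit constants depending on $M$ in Lemma~\ref{lem:sw} are harmless. If a rate is wanted, the same argument gives
\[
P_n(d\mid k)=\frac{1}{10}+O_k\bigl(\exp(-c\sqrt{n\log 10})\bigr).
\]
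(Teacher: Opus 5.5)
Your proposal is correct and follows essentially the same route as the paper: reduce $d_k(p)$ to $\delta_k(p \bmod 10^{k+1})$, count the $\varphi(M)/10$ reduced classes with $\delta_k(r)=d$ via Lemma~\ref{lem:comb}, apply Lemma~\ref{lem:sw} to each of these finitely many classes, and divide by $N_n(k)=\pi(10^n)+O_k(1)$. Your observation that $2,5<10^k$ are already excluded by the condition $n(p)>k$ is a slightly cleaner way to handle the exceptional primes. In your stated rate, the constant $c$ must be taken slightly smaller, to absorb the factor $n$ that appears when dividing by $\pi(10^n)\asymp 10^n/n$.
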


\begin{proof}
	Fix $k \ge 1$ and $d \in \{0,\dots,9\}$ throughout; only $n \to \infty$ varies. Set
	\[
	M := 10^{k+1}.
	\]
	
	Let $p$ be a prime with $n(p) > k$, i.e.\ $p$ has more than $k$ digits, so that the digit $d_k(p)$ is well defined. By division algorithm,
	\[
	p = q\cdot M + r, \qquad q \ge 0,\ \ 0 \le r < M,
	\]
	so $r = p \bmod M$. Because $M = 10^{k+1}$ is a power of $10$, multiplying $q$ by $M$ only ever changes digits in positions $\ge k+1$; it can never alter the digits in positions $0,1,\dots,k$. Consequently the digit of $p$ in position $k$ coincides with the digit of $r$ in position $k$:
	\[
	d_k(p) = \delta_k(r) = \delta_k(p \bmod M),
	\]
	where $\delta_k$ is exactly the digit-extraction function from Lemma~\ref{lem:comb}. This is the key reduction: the value of the $k$-th digit depends on $p$ \emph{only through} its residue class modulo $M$.\\
	
	Note also that if $\gcd(p,10) \ne 1$ then $p \in \{2,5\}$ (since $p$ is prime), and for every other prime, $\gcd(p, M) = \gcd(p,10^{k+1}) = 1$, since $M$'s only prime factors are $2$ and $5$. So for all but at most two primes, $p \bmod M$ lies in the unit group $U(M)$.\\
	
	We want to count
	\[
	\#\{p < 10^n : n(p) > k,\ d_k(p) = d\}.
	\]

	We know, a prime $p<10^n$ with $n(p) > k$ satisfies $d_k(p) = d$ if and only if its residue $a := p \bmod M$ lies in $U(M)$ and satisfies $\delta_k(a) = d$ \emph{except} for the finitely many primes $p \le 10^k$ and the primes $2,5$, which we must exclude or absorb into an error term. Grouping the (all but boundedly many) remaining primes by their residue class $a \pmod M$ gives an exact partition:
	\[
	\#\{p < 10^n : k < n(p),\ d_k(p) = d\} \;=\; \sum_{\substack{a \in U(M) \\ \delta_k(a) = d}} \#\{p < 10^n : p \equiv a \!\!\!\pmod M\} \;+\; O_k(1).
	\]
	Writing $\pi(x; M, a) := \#\{p \le x : p \equiv a \pmod M\}$ for the count of primes up to $x$ in the arithmetic progression $a \bmod M$, this reads

	\begin{equation}\label{eq:int_digit_count_d_order_1}
		\#\{p < 10^n : k < n(p),\ d_k(p) = d\} \;=\; \sum_{\substack{a \in U(M) \\ \delta_k(a) = d}} \pi(10^n; M, a) \;+\; O_k(1).
	\end{equation}
	Here $k$ is held fixed, so $M = 10^{k+1}$ is a fixed modulus and the $O_k(1)$ term coming entirely from primes $p \le 10^k$, together with $p \in \{2,5\}$ which does not grow with $n$.\\

	By Lemma~\ref{lem:comb} (applied with this same $k$ and $M$), the number of classes $a \in U(M)$ with $\delta_k(a) = d$ is exactly
	\[
	\#\{a \in U(M) : \delta_k(a) = d\} \;=\; \frac{\varphi(M)}{10},
	\]
	and this count is the \emph{same number}, $\varphi(M)/10$, for every choice of $d \in \{0,\dots,9\}$. So the outer sum in \eqref{eq:int_digit_count_d_order_1} always has exactly $\varphi(M)/10$ terms, regardless of $d$.\\
	
	Since $M$ is fixed (independent of $n$) and each $a$ appearing in the sum satisfies $\gcd(a,M)=1$, Lemma~\ref{lem:sw} applies to each of the $\varphi(M)/10$ terms individually, giving
	\[
	\pi(10^n; M, a) = \frac{\pi(10^n)}{\varphi(M)} + O_M\bigl(10^n e^{-c\sqrt{\log 10^n}}\bigr) \qquad \text{for every such } a,
	\]
	with the same leading term $\pi(10^n)/\varphi(M)$ for \emph{every} $a$. The asymptotic density does not depend on which reduced residue class $a$ we picked, only on $M$.\\

	Summing the estimate from \eqref{eq:int_digit_count_d_order_1} over the $\varphi(M)/10$ classes $a$ identified we get,

	$$
	\begin{array}{ll}
		\displaystyle\sum_{\substack{a \in U(M) \\ \delta_k(a) = d}} \pi(10^n; M, a) &= \frac{\varphi(M)}{10}\cdot\frac{\pi(10^n)}{\varphi(M)} \;+\; \frac{\varphi(M)}{10}\cdot O_M\bigl(10^n e^{-c\sqrt{\log 10^n}}\bigr)\\
		&= \frac{\pi(10^n)}{10}\;+\; \frac{\varphi(M)}{10}\cdot O_M\bigl(10^n e^{-c\sqrt{\log 10^n}}\bigr)
	\end{array}
	$$

	The error term is $O_M\bigl(10^n e^{-c\sqrt{\log 10^n}}\bigr)$ as well (absorbing the fixed constant $\varphi(M)/10$ into the $O_M$), and since $M$ depends only on $k$, which is fixed, this is $o(\pi(10^n))$ as $n \to \infty$. Hence,
	\[
	\sum_{\substack{a \in U(M) \\ \delta_k(a) = d}} \pi(10^n; M, a) \;\sim\; \frac{\pi(10^n)}{10}.
	\]
	Combining this with \eqref{eq:int_digit_count_d_order_1} (and noting the $O_k(1)$ term there is negligible next to $\pi(10^n) \to \infty$) we get,
	
	\begin{equation}\label{eq:prime_count_with_digit_d_estimate}
		\#\{p < 10^n : k < n(p),\ d_k(p) = d\} \;\sim\; \frac{\pi(10^n)}{10}.
	\end{equation}
	
	By definition, $N_n(k) = \#\{p < 10^n : k < n(p)\}$ counts all primes below $10^n$ with more than $k$ digits. Excluding a prime from this count requires $n(p) \le k$, i.e.\ $p \le 10^k$; there are at most $10^k = O_k(1)$ such primes. Hence
	\[
	N_n(k) = \pi(10^n) - O_k(1) = \pi(10^n) + O_k(1),
	\]
	so in particular $N_n(k) \sim \pi(10^n)$ as $n \to \infty$ (again since $k$ is fixed).
	
	Dividing \eqref{eq:prime_count_with_digit_d_estimate} by $N_n(k)$ and using $N_n(k) \sim \pi(10^n)$,
	\[
	P_n(d \mid k) = \frac{\#\{p < 10^n : k < n(p),\ d_k(p) = d\}}{N_n(k)} \;\longrightarrow\; \frac{\pi(10^n)/10}{\pi(10^n)} = \frac{1}{10} \qquad (n \to \infty).
	\]
	Since $k \ge 1$ and $d \in \{0,\dots,9\}$ were arbitrary (but fixed) at the outset, this proves $P_n(d\mid k) \to 1/10$ for every fixed $k \ge 1$ and every digit $d$. \qedhere
\end{proof}
	
	\section{The Leading Digit}\label{sec:leading}

	\begin{lemma}[Decade counts via PNT]\label{lem:decade}
		For $d \in \{1,\dots,9\}$ and $m \ge 1$, let $I_m(d) = \bigl[d\cdot 10^{m-1},\, (d+1)\cdot 10^{m-1}\bigr)$ be the integers of length $m$ with leading digit $d$, and let $\pi(I_m(d))$ denote the number of primes in $I_m(d)$. Then, uniformly for $d \in \{1,\dots,9\}$,
		\[
		\pi(I_m(d)) \;=\; \frac{10^{m-1}}{(m-1)\log 10} \;+\; o\!\left(\frac{10^{m-1}}{m}\right) \qquad (m \to \infty).
		\]
	\end{lemma}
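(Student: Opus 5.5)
The plan is to write $\pi(I_m(d)) = \pi\bigl((d+1)10^{m-1}\bigr) - \pi\bigl(d\cdot 10^{m-1}\bigr)$, up to an $O(1)$ discrepancy from the half-open endpoint convention. I would then apply the Prime Number Theorem with classical error term in the form
\[
\pi(x) = \operatorname{Li}(x) + O\!\left(x e^{-c\sqrt{\log x}}\right),
\]
which was already used in the proof of Lemma~\ref{lem:sw}. With $a = d\cdot 10^{m-1}$ and $b = (d+1)10^{m-1}$, this gives
\[
\pi(I_m(d)) = \int_a^b \frac{dt}{\log t} + O\!\left(10^{m} e^{-c\sqrt{(m-1)\log 10}}\right).
\]
Since $b \le 10^m$ for every $d\le 9$, the error is $o(10^{m-1}/m)$ uniformly in $d$: the factor $e^{-c\sqrt{m}}$ beats any power of $m$, and the extra factor $10$ is harmless.

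Next I would evaluate the integral. The interval $[a,b]$ has length exactly $10^{m-1}$, independent of $d$, and for $t\in[a,b]$ we have
\[
\log t \in \bigl[(m-1)\log 10 + \log d,\ (m-1)\log 10 + \log(d+1)\bigr] \subseteq \bigl[(m-1)\log 10,\ m\log 10\bigr].
\]
Hence $\frac{1}{\log t} = \frac{1}{(m-1)\log 10}\bigl(1 + O(1/m)\bigr)$ uniformly in $t$ and in $d\in\{1,\dots,9\}$, because $\log(d+1)\le\log 10$ is bounded. Integrating over an interval of length $10^{m-1}$ gives
\[
\int_a^b \frac{dt}{\log t} = \frac{10^{m-1}}{(m-1)\log 10} + O\!\left(\frac{10^{m-1}}{m^2}\right),
\]
and $O(10^{m-1}/m^2)=o(10^{m-1}/m)$. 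Combining this with the PNT error bound yields the statement, with uniformity in $d$ following because every implied constant depends only on $d\le 9$.

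The only point needing care, and the one I would flag as the main obstacle, is making sure the PNT is applied in its \emph{difference} form with an error that is small relative to the length of the short interval. The interval length $10^{m-1}$ is a fixed positive proportion (at least $1/9$) of the endpoint $b$. So the relative error $e^{-c\sqrt{\log x}}$ of the PNT suffices, and no short-interval prime theorem is needed. Even the bare asymptotic $\pi(x)\sim x/\log x$ would work if handled carefully. In that version I would write $\pi(b)-\pi(a) = \frac{b}{\log b}-\frac{a}{\log a} + o(10^{m}/m)$, then check that $\frac{b}{\log b}-\frac{a}{\log a} = \frac{b-a}{(m-1)\log 10} + O(10^{m-1}/m^2)$ via the mean value theorem applied to $x/\log x$. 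I would nevertheless use the $\operatorname{Li}$ form with the de la Vallée Poussin error term, since it makes the uniformity and the $o(\cdot)$ bookkeeping immediate.
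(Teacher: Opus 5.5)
Your proof is correct and shares the paper's skeleton. Write $\pi(I_m(d))=\pi((d+1)L)-\pi(dL)$ with $L=10^{m-1}$, apply the PNT at both endpoints, and check that the $d$-dependence disappears in the main term $L/A$, $A=(m-1)\log 10$.

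Where you differ is the analytic input and how the main term is extracted. The paper uses only the bare asymptotic $\pi(x)=x/\log x+o(x/\log x)$ and absorbs both endpoint errors into $o(L/m)$. It then expands $1/\log(dL)=1/A+O(1/(Am))$ at each endpoint, so that $(d+1)L/A-dL/A=L/A$. This is essentially the mean-value-theorem alternative you sketch in your last paragraph.

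Your main route uses $\pi(x)=\operatorname{Li}(x)+O(xe^{-c\sqrt{\log x}})$. This reduces everything to $\int_a^b dt/\log t$ over an interval of length exactly $L$, on which $1/\log t$ is trapped between $1/(m\log 10)$ and $1/A$. That is cleaner, and it proves more than the lemma asks: $\pi(I_m(d))=L/A+O(L/m^2)$ uniformly in $d$, hence $\varepsilon_m(d)=O(1/m)$. This is the kind of explicit rate the abstract advertises. It would also make the Toeplitz-type averaging in Proposition~\ref{prop:leading} nearly immediate: with Chebyshev bounds on $a_m$ it gives $P_n(d\mid\mathrm{lead})=1/9+O(1/n)$ directly.

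The cost is the stronger de la Vall\'ee Poussin input, which the paper already invokes in Lemma~\ref{lem:sw} anyway. The paper's version needs nothing beyond $\pi(x)\sim x/\log x$, but it yields only $o(L/m)$.

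Two harmless remarks. First, $L/b=1/(d+1)\ge 1/10$, not $1/9$; the bound $1/9$ holds for $L/a=1/d$. Second, for $m\ge 2$ both endpoints are multiples of $10$, so the half-open convention causes no $O(1)$ discrepancy at all.
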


	\begin{proof}
		Throughout, write $L := 10^{m-1}$ for brevity, so that $I_m(d) = [dL,\, (d+1)L)$, and fix $d \in \{1,\dots,9\}$.\\
		
		Since $I_m(d) = [dL, (d+1)L)$ is a half-open interval of integers, the number of primes it contains is exactly
		\[
		\pi(I_m(d)) = \pi\bigl((d+1)L\bigr) - \pi(dL),
		\]
		where $\pi(x)$ is the usual prime-counting function (counting primes $\le x$).\\
		
		The Prime Number Theorem states $\pi(x) = \dfrac{x}{\log x} + o\!\left(\dfrac{x}{\log x}\right)$ as $x \to \infty$. Applying this at $x = dL$ and $x = (d+1)L$ (both $\to \infty$ as $m \to \infty$, since $d \ge 1$) gives
		
		\begin{equation}\label{eq:decade_prime_count}
			\pi(I_m(d)) = \frac{(d+1)L}{\log\bigl((d+1)L\bigr)} - \frac{dL}{\log(dL)} + o\!\left(\frac{L}{m}\right)
		\end{equation}
		where the two individual $o(x/\log x)$ error terms have been combined into a single $o(L/m)$ term; this is legitimate because $\log(dL) \asymp \log((d+1)L) \asymp m\log 10$ for $d$ in the fixed finite range $\{1,\dots,9\}$, so both errors are of the same order $o(L/m)$, uniformly in $d$.\\
		
		Since $L = 10^{m-1}$, and $(m-1)\log 10 \to \infty$ as $m \to \infty$, hence,

		$$
		\begin{array}{ll}
			\log(dL) &= \log d + \log L\\
			&= \log d + (m-1)\log 10\\
			&= (m-1)\log 10 + O(1)\\
			&= (m-1)\log 10 \cdot
			\left(1 + \frac{O(1)}{(m-1)\log 10}\right)\\
			&= (m-1)\log 10 \cdot \bigl(1 + O(1/m)\bigr)\qquad (m \to \infty)
		\end{array}
		$$

		and this holds uniformly for all $d \in \{1,\dots,9\}$. Exactly the same computation, with $d$ replaced by $d+1 \in \{2,\dots,10\}$ (still a bounded range), gives
		\[
		\log\bigl((d+1)L\bigr) = (m-1)\log 10 \cdot \bigl(1 + O(1/m)\bigr).
		\]
		
		Write $A := (m-1)\log 10$. Using the expansion $\dfrac{1}{1+\varepsilon} = 1 - \varepsilon + O(\varepsilon^2)$ for small $\varepsilon$ we get,
		\[
		\frac{1}{\log(dL)} = \frac{1}{A}\cdot\frac{1}{1+O(1/m)} = \frac{1}{A}\Bigl(1 - O(1/m)\Bigr) = \frac{1}{A} + O\!\left(\frac{1}{Am}\right),
		\]
		and likewise
		\[
		\frac{1}{\log((d+1)L)} = \frac{1}{A} + O\!\left(\frac{1}{Am}\right).
		\]
		Multiplying through by $dL$ and $(d+1)L$ respectively (both are $O(L)$, since $d\le 9$),
		
		\[
		\frac{dL}{\log(dL)} = \frac{dL}{A} + O\!\left(\frac{L}{Am}\right), \qquad \frac{(d+1)L}{\log((d+1)L)} = \frac{(d+1)L}{A} + O\!\left(\frac{L}{Am}\right).
		\]
		
		Subtracting the two expressions we get,
		\[
		\frac{(d+1)L}{\log((d+1)L)} - \frac{dL}{\log(dL)} = \frac{(d+1)L - dL}{A} + O\!\left(\frac{L}{Am}\right) = \frac{L}{A} + O\!\left(\frac{L}{Am}\right).
		\]
		Note that the leading term $L/A$ no longer involves $d$ at all. It remains to check the size of the error term $O(L/(Am))$ against the target error $o(L/m)$ from the lemma's statement. Since $A = (m-1)\log 10 \asymp m$, we have
		\[
		\frac{L}{Am} \asymp \frac{L}{m^2} = o\!\left(\frac{L}{m}\right) \qquad (m \to \infty),
		\]
		so this error term is indeed absorbed into an $o(L/m)$ bound.\\
		
		Substituting the value of the expression with the $o(L/m)$ error in \eqref{eq:decade_prime_count} we get,
		
		\[
		\pi(I_m(d)) = \frac{L}{A} + o\!\left(\frac{L}{m}\right) = \frac{10^{m-1}}{(m-1)\log 10} + o\!\left(\frac{10^{m-1}}{m}\right).
		\]
		\qedhere

	\end{proof}
	
	Define $a_m := \pi(10^m) - \pi(10^{m-1}) = \sum_{d=1}^9 \pi(I_m(d))$, so that Lemma~\ref{lem:decade} gives, for each fixed $d \in \{1,\dots,9\}$,
	\[
	\varepsilon_m(d) \;:=\; \frac{\pi(I_m(d))}{a_m} - \frac{1}{9} \;\longrightarrow\; 0 \qquad (m \to \infty),
	\]
	with $\varepsilon_m(d)$ uniformly bounded, $\varepsilon_m(d) \in [-1/9,\, 8/9]$.
	
	\begin{proposition}\label{prop:leading}
		For each fixed $d \in \{1,\dots,9\}$,
		\[
		P_n(d \mid \mathrm{lead}) \longrightarrow \frac{1}{9} \qquad (n \to \infty).
		\]
	\end{proposition}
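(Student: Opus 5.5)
Every prime $p<10^n$ has length $m=n(p)\in\{1,\dots,n\}$, and its leading digit is $d$ exactly when $p\in I_m(d)$. The count therefore splits over decades:
\[
\#\{p\in S_n : d_{n(p)-1}(p)=d\} \;=\; \sum_{m=1}^{n} \pi(I_m(d)) \;=\; \sum_{m=1}^{n} a_m\Bigl(\tfrac19+\varepsilon_m(d)\Bigr),
\]
where $a_m=\pi(10^m)-\pi(10^{m-1})$ and $\varepsilon_m(d)$ are as defined just before the statement. For $m=1$ we take $a_1=\pi(10)$, with the few primes below $10$ absorbed into an $O(1)$ term. Since $\sum_{m=1}^n a_m=\pi(10^n)=N_n$, dividing by $N_n$ gives
\[
P_n(d\mid\mathrm{lead}) \;=\; \frac19 \;+\; \frac{1}{N_n}\sum_{m=1}^{n} a_m\,\varepsilon_m(d) \;+\; O(1/N_n).
\]
It therefore suffices to show that the weighted average $\sum_m a_m\varepsilon_m(d)/\sum_m a_m$ tends to $0$. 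This is a Toeplitz (Silverman--Toeplitz) averaging statement: $a_m\ge 0$, $\varepsilon_m(d)\to 0$, and each weight $w_{n,m}:=a_m/N_n$ tends to $0$ for fixed $m$ because $N_n\to\infty$.

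I would carry this out directly. Given $\eta>0$, use Lemma~\ref{lem:decade} (via the stated convergence $\varepsilon_m(d)\to 0$) to choose $m_0$ with $|\varepsilon_m(d)|<\eta$ for all $m>m_0$. Then split the sum at $m_0$. The tail satisfies
\[
\frac{1}{N_n}\sum_{m>m_0} a_m|\varepsilon_m(d)| \le \eta \cdot \frac{1}{N_n}\sum_{m>m_0} a_m \le \eta .
\]
The head is bounded using $|\varepsilon_m(d)|\le 8/9$:
\[
\frac{1}{N_n}\sum_{m\le m_0} a_m|\varepsilon_m(d)| \le \frac{8}{9}\cdot\frac{\pi(10^{m_0})}{\pi(10^n)} \longrightarrow 0 \quad (n\to\infty).
\]
Hence $\limsup_n \bigl|P_n(d\mid\mathrm{lead})-\tfrac19\bigr|\le\eta$, and letting $\eta\to 0$ proves the proposition.

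The main obstacle is mostly conceptual. The top decade dominates the count, since $a_n/N_n\to 9/10$ by the PNT, but the lower decades still carry a fixed positive share of $N_n$. So one cannot simply discard everything except the last decade; the Toeplitz split above is what handles those lower decades. The only analytic input needed is that $\varepsilon_m(d)\to 0$. This follows because Lemma~\ref{lem:decade} gives each $\pi(I_m(d))$ the same main term $L/A$ independent of $d$, so $a_m\sim 9L/A$, and the lemma's error term is $o(L/m)$ with $L/A\asymp L/m$, so it is negligible relative to the main term.
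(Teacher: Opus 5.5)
Your proof is correct. Its skeleton matches the paper's: the decade decomposition $\sum_{m\le n}\pi(I_m(d))=N_n/9+\sum_{m\le n}a_m\varepsilon_m(d)$, followed by a Toeplitz-type averaging of $\varepsilon_m(d)\to0$. You differ only in where you cut the weighted sum.

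The paper first uses the PNT to compute the limiting weight profile $w_{n,n-j}\to 9/10^{j+1}$. It then splits at a fixed \emph{lag} $A$ below the top decade. Terms with $m>n-A$ are small because $|\varepsilon_m(d)|<\eta$ there. Terms with $m\le n-A$ are bounded by $\tfrac89$ times their total weight, which is compared with the geometric tail. That comparison really rests on the telescoped identity $\sum_{m\le n-A}a_m=\pi(10^{n-A})$ and on $\pi(10^{n-A})/\pi(10^n)\to10^{-A}$, not just on the pointwise limits of the weights.

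You instead cut at a fixed \emph{absolute} index $m_0$. The head then has total weight $\pi(10^{m_0})/\pi(10^n)\to0$, which needs nothing beyond $N_n\to\infty$. This is the textbook Silverman--Toeplitz argument and it is shorter. It also shows that the geometric concentration of the weights is irrelevant to the qualitative limit; the PNT enters only through $\varepsilon_m(d)\to0$.

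The paper's lag-based bookkeeping is the version that adapts to quantitative statements. $\varepsilon_m(d)$ is typically of order $1/m$, since the density $1/\log t$ changes by a relative $O(1/m)$ across a decade. Because the weight sits on $m=n-O(1)$, a bound $\varepsilon_m(d)=O(1/m)$ transfers to $P_n(d\mid\mathrm{lead})=\tfrac19+O(1/n)$. A fixed cutoff by itself only gives $o(1)$.

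One cosmetic point: your $O(1/N_n)$ term is unnecessary. Since $a_1=\pi(10)-\pi(1)=\pi(10)=\sum_{d=1}^9\pi(I_1(d))$, the decomposition is exact.
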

	
	\begin{proof}

		Every prime $p < 10^n$ has some number of digits $m = n(p) \in \{1,\dots,n\}$, and lies in exactly one decade $I_m(d')$ for the appropriate leading digit $d'$. In particular, the primes with leading digit exactly $d$ are partitioned according to how many digits they have:

		\begin{equation}\label{eq:leading_digit_count}
			\#\{p < 10^n : \text{leading digit} = d\} \;=\; \sum_{m=1}^n \pi(I_m(d)),
		\end{equation}
		since $\pi(I_m(d))$ counts precisely the $m$-digit primes with leading digit $d$, and summing over $m=1,\dots,n$ covers every possible digit-length below $10^n$.\\

		Recall $a_m := \pi(10^m) - \pi(10^{m-1})$ is the total number of $m$-digit primes, and $\varepsilon_m(d) := \pi(I_m(d))/a_m - 1/9$ measures the relative deviation of decade $m$'s count for digit $d$ from the ``perfectly even" value $a_m/9$. Rearranging the definition of $\varepsilon_m(d)$ gives
		\[
		\pi(I_m(d)) = a_m\left(\frac19 + \varepsilon_m(d)\right).
		\]
		Substituting into \eqref{eq:leading_digit_count} and splitting the sum into its two pieces,
		\[
		\#\{p < 10^n : \text{leading digit} = d\} = \sum_{m=1}^n a_m\left(\frac19 + \varepsilon_m(d)\right) = \frac19\sum_{m=1}^n a_m + \sum_{m=1}^n a_m\,\varepsilon_m(d).
		\]
		Since $\sum_{m=1}^n a_m$ telescopes to $\pi(10^n) - \pi(10^0) = \pi(10^n) = N_n$, this becomes
		\begin{equation}\label{eq:non_asymptotic_leading_digit_count}
			\#\{p < 10^n : \text{leading digit} = d\} = \frac{N_n}{9} + \sum_{m=1}^n a_m\,\varepsilon_m(d).
		\end{equation}

		So the count of leading-digit-$d$ primes equals the value $N_n/9$ plus a correction term built from the decade-level errors $\varepsilon_m(d)$. Hence it is suffice to prove that,
		\[
		\sum_{m=1}^n a_m\,\varepsilon_m(d) = o(N_n) \qquad (n\to\infty).
		\]
		
		Divide both sides of the target claim by $N_n$. Define the weights
		\[
		w_{n,m} := \frac{a_m}{N_n} \ge 0, \qquad m = 1,\dots,n.
		\]
		Since $\sum_{m=1}^n a_m = N_n$, we get $\sum_{m=1}^n w_{n,m} = 1$: for each fixed $n$, the numbers $w_{n,1},\dots,w_{n,n}$ form a probability distribution over decades $m=1,\dots,n$. The claim to prove becomes
		\[
		\sum_{m=1}^n w_{n,m}\,\varepsilon_m(d) \longrightarrow 0 \qquad (n\to\infty),
		\]
		i.e., a weighted average of  $\varepsilon_m(d)$ vanishes.\\
		
		By the Prime Number Theorem, the count of $m$-digit primes satisfies
		\[
		a_m = \pi(10^m) - \pi(10^{m-1}) \sim \frac{10^m}{m\log 10} - \frac{10^{m-1}}{(m-1)\log 10} \sim \frac{9\cdot 10^{m-1}}{(m-1)\log 10} \qquad (m \to \infty),
		\]
		Similarly,
		\[
		N_n = \pi(10^n) \sim \frac{10^n}{n\log 10} \qquad (n\to\infty).
		\]
		Fix a lag $j = n-m \ge 0$ (so $m = n-j$) and let $n \to \infty$ with $j$ fixed. Then
		\[
		w_{n,\,n-j} = \frac{a_{n-j}}{N_n} \sim \frac{9\cdot 10^{n-j-1}/((n-j-1)\log 10)}{10^n/(n\log 10)} = \frac{9\cdot 10^{-j-1} \cdot n}{n-j-1}.
		\]
		As $n\to\infty$ with $j$ fixed, $n/(n-j-1) \to 1$, so
		\[
		w_{n,\,n-j} \;\longrightarrow\; \frac{9}{10^{j+1}} \qquad (n\to\infty, \ j \text{ fixed}).
		\]

		The limiting weights $9/10^{j+1}$ decay geometrically in the lag $j$: the top decade ($j=0$) carries weight $\to 9/10$, the next ($j=1$) carries weight $\to 9/100$, and so on. This means that, for large $n$, almost all of the total weight $1$ is concentrated on the last few decades $m$ close to $n$, with only a vanishing tail of weight spread over the earlier decades $m \ll n$. Formally, for any $A \ge 0$,
		
		\[
		\sum_{j > A} \frac{9}{10^{j+1}} \longrightarrow 0 \qquad \text{as } A \to \infty,
		\]
		being the tail of a convergent geometric series. This is exactly the property needed to control a weighted average of terms that vanish only eventually (i.e., $\varepsilon_m(d) \to 0$ but not uniformly from $m=1$): a \textit{regular, geometrically-concentrating} weighting scheme like this one will still send the weighted average to $0$, by the same principle underlying Toeplitz's theorem on regular summability methods.\\
		
		We now prove $\sum_{m=1}^n w_{n,m}\varepsilon_m(d) \to 0$ rigorously. Let $\eta > 0$ be arbitrary.
 
		Since $\displaystyle \sum_{j>A} 9/10^{j+1} \to 0$ as $A\to\infty$, choose $A$ large enough that
		\[
		\sum_{j > A} \frac{9}{10^{j+1}} < \eta.
		\]
 
		Since $\varepsilon_m(d) \to 0$ as $m\to\infty$ (this is the content of Lemma~\ref{lem:decade}, rephrased), choose $n_0$ large enough that
		\[
		|\varepsilon_m(d)| < \eta \qquad \text{for all } m > n_0.
		\]

		For $n > n_0 + A$, split the weighted sum at $m = n-A$ into a ``near" part ($m > n-A$, i.e., lag $j < A$) and a ``far" part ($m \le n-A$, i.e., lag $j \ge A$):
		\[
		\left|\sum_{m=1}^n w_{n,m}\,\varepsilon_m(d)\right| \;\le\; \underbrace{\sum_{m \le n-A} w_{n,m}\,|\varepsilon_m(d)|}_{\text{far terms}} \;+\; \underbrace{\sum_{m > n-A} w_{n,m}\,|\varepsilon_m(d)|}_{\text{near terms}}.
		\]
 
		Since, $|\varepsilon_m(d)|\leq\frac{8}{9}$ as per Lemma~\ref{lem:decade}. Hence,
		\[
		\sum_{m \le n-A} w_{n,m}\,|\varepsilon_m(d)| \;\le\; \frac89 \sum_{m \le n-A} w_{n,m}.
		\]
		The remaining sum $\sum_{m\le n-A} w_{n,m}$ is exactly the total weight on lags $j \ge A$. For large $n$, this is close to the tail $\sum_{j\ge A} 9/10^{j+1} < \eta$. So for $n$ sufficiently large,
		\[
		\sum_{m \le n-A} w_{n,m}\,|\varepsilon_m(d)| \;=\; O(\eta).
		\]
 
		Every index $m$ in this range satisfies $m > n-A \ge n_0$ (using $n > n_0+A$), so by the choice of $n_0$, $|\varepsilon_m(d)| < \eta$ for every such $m$. Hence,
		\[
		\sum_{m > n-A} w_{n,m}\,|\varepsilon_m(d)| \;\le\; \eta \sum_{m > n-A} w_{n,m} \;\le\; \eta \sum_{m=1}^n w_{n,m} = \eta \cdot 1 = \eta,
		\]
		using $\sum_m w_{n,m}=1$.
 
		Adding the two bounds,
		\[
		\left|\sum_{m=1}^n w_{n,m}\,\varepsilon_m(d)\right| = O(\eta) + \eta = O(\eta)
		\]
		for all $n$ sufficiently large (depending on $A$ and $n_0$, which depend on $\eta$). Since $\eta>0$ was arbitrary, this shows
		\[
		\sum_{m=1}^n w_{n,m}\,\varepsilon_m(d) \longrightarrow 0 \qquad (n\to\infty),
		\]
		i.e. (Multiplying back by $N_n$ to undo normalization),
		\[
		\sum_{m=1}^n a_m\,\varepsilon_m(d) = o(N_n).
		\]
 		
		Substituting this into \eqref{eq:non_asymptotic_leading_digit_count},
		\[
		\#\{p < 10^n : \text{leading digit} = d\} = \frac{N_n}{9} + o(N_n).
		\]
		Dividing both sides by $N_n$,
		\[
		P_n(d \mid \mathrm{lead}) = \frac{\#\{p < 10^n : \text{leading digit} = d\}}{N_n} = \frac19 + \frac{1}{N_n}\sum_{m=1}^n a_m\,\varepsilon_m(d) \;\longrightarrow\; \frac19 \qquad (n\to\infty),
		\]
		which is the desired conclusion, valid for every fixed $d \in \{1,\dots,9\}$. \qedhere
	\end{proof}
	
	\section{The Unification of Findings}\label{sec:unific}
		Propositions~\ref{prop:interior} and \ref{prop:leading} together establish the Theorem~\ref{conj:point_digit_dist}. $\blacksquare$
	
	\section{Conclusion}\label{sec:conclusion}
	
	This paper establishes the position-wise asymptotic distribution of decimal digits among the prime numbers. The main result demonstrates that the limiting distribution depends fundamentally on whether the position under consideration is an interior position or the leading position. Specifically, for every fixed interior position $k\geq 1$, each decimal digit $d\in\{0,\ldots,9\}$ occurs with limiting probability	$\lim_{n\to\infty}P_n(d\mid k)=\frac{1}{10},
	$ whereas for the leading digit the admissible digits $d\in\{1,\ldots,9\}$ satisfy $\lim_{n\to\infty}P_n(d\mid\mathrm{lead})=\frac{1}{9}.$	Therefore, the decimal digits of prime numbers exhibit asymptotic uniformity at every fixed interior position, while the leading digit is asymptotically uniform over the nine possible nonzero digits.\\
	
	The proof demonstrates that these two limiting laws arise from distinct mathematical mechanisms. For an interior position $k\geq1$, the $k$-th digit is determined by the residue of a prime modulo $10^{k+1}$. With the exception of the finitely many primes $2$ and $5$, the relevant residues belong to the reduced residue system modulo this power of $10$. The combinatorial structure of these reduced residue classes distributes the ten possible values of the $k$-th digit equally. The Siegel–Walfisz theorem guarantees asymptotic equidistribution of primes among the corresponding reduced residue classes. Together, these facts yield the limiting probability $1/10$ for every interior digit.\\
	
	The leading digit requires a fundamentally different argument because it is not determined by a fixed residue class modulo a power of $10$. Instead, primes with a given leading digit are naturally partitioned according to their decimal length. The Prime Number Theorem demonstrates that, within each sufficiently large decade, the nine possible leading digits receive asymptotically equal numbers of primes. The contributions from individual decades are then combined through a weighted averaging argument. As the number of primes in the most recent decades dominates the cumulative count and the associated weights exhibit a geometrically decaying profile, the decade-wise errors vanish in the global average. This establishes the limiting value $1/9$ for every admissible leading digit.\\
	
	An important consequence of the result is that the apparently similar questions concerning interior and leading digits should not be treated as instances of a single elementary counting argument. Interior digit equidistribution is fundamentally an arithmetic progression problem, whereas leading-digit equidistribution is fundamentally a problem concerning the distribution of primes across multiplicative intervals. The distinction explains why the two parts of the proof require different analytic tools.\\
	
	The result also clarifies the relationship between position-wise digit distribution and broader questions concerning the digital structure of prime numbers. The theorem establishes the limiting marginal distribution of a single fixed digit position, but it does not establish independence between different positions, equidistribution of blocks of consecutive digits, or any stronger form of normality of the primes in base $10$. In particular, the position-wise result should not be interpreted as a proof of arbitrary joint digit statistics. Such questions require additional information concerning correlations between digits and remain natural directions for further investigation.\\
	
	Several extensions of the present framework are possible. One natural direction is to replace a single digit with a block of digits and study the joint distribution of	$(d_k(p),d_{k+1}(p),\ldots,d_{k+\ell-1}(p))$ for fixed $k$ and $\ell$. Another direction is to investigate the corresponding position-wise distributions in an arbitrary base $q\geq2$, where the arithmetic component involves primes in residue classes modulo powers of $q$. It is also of interest to obtain effective error terms that are uniform in the position $k$, rather than restricting $k$ to be fixed as $n\to\infty$.\\
	
	The results presented demonstrate that the multiplicative structure of the primes is compatible, in a precise asymptotic sense, with uniform positional digit distributions. The interior digits are governed by equidistribution in reduced residue classes, while the leading digit is governed by the asymptotic stability of prime density across consecutive decades. Together, these arguments establish the position-wise prime digit distribution stated in Theorem~\ref{conj:point_digit_dist} and provide a rigorous framework for further study of the interaction between prime numbers and positional digit structure.
	  
	\newpage
	\nocite{*}
	\bibliographystyle{unsrt}
	\bibliography{reference}
 
\end{document}